\theoremstyle{plain}
\newtheorem{theorem}{Theorem}[section]
\newtheorem{proposition}[theorem]{Proposition}
\newtheorem{lemma}[theorem]{Lemma}
\theoremstyle{definition}
\newtheorem{remark}[theorem]{Remark}
\newcommand{\norm}[1]{\left\lVert#1\right\rVert}
\title{An Explicit non-Poissonian Pair Correlation Function}
\author{Christian Wei\ss{}}
\date{\today}
\begin{document}

\maketitle

\begin{abstract} A generic uniformly distributed random sequence on the unit interval has Poissonian pair correlations. Usually, the pair correlations statistic is therefore studied for equidistributed sequences. At the same time, there are only very few explicitly known examples of sequences with this property and many types of deterministic sequences have been proven to fail having the Poissonian pair correlation property. In this paper we study the pair correlation statistic in the non-uniform case and analyze the first elementary example of such a sequence, namely $x_n := \left\{ \frac{\log(2n-1)}{\log(2)} \right\}$, which is a standard low-dispersion sequence. The proof heavily relies on a full understanding of the gap structure of $(x_n)_{n=1}^N$. Furthermore, we discuss differences to the weak pair correlation function which turns out to be linear.
\end{abstract}

\section{Introduction}

There are different concepts, which quantify the degree of uniformity of a sequence $(x_n)_{n \in \mathbb{N}}$ in $[0,1]$. One natural object to study is the behavior of gaps between the first $N \in \mathbb{N}$ elements of $(x_n)_{n \in \mathbb{N}}$ on a local scale. This idea was popularized by Rudnick and Sarnak in \cite{RS98} and is formalized on the $N$-point pair correlation function defined by
$$F_{N}(s) := \frac{1}{N} \# \left\{ 1 \leq k \neq l \leq N \ : \ \left\| x_k - x_l\right\| \leq \frac{s}{N} \right\},$$
where $\left\| \cdot \right\|$ is the distance of a number from its nearest integer. A sequence $(x_n)_{n \in \mathbb{N}}$ is said to have Poissonian pair correlations if the limit
$$\textcolor{black}{\lim_{N \to \infty}} F_{N}(s) =: F(s)$$
exists and satisfies $F(s) = 2s$ for all $s \geq 0$. Any sequence which possesses Poissonian pair correlations, is uniformly distributed according to \cite{ALP18, GL17, Ste18}. The task to calculate the pair correlation statistic of a given sequence is closely connected to studying the gap structure of the sequence, i.e. the distances between two neighboring elements of the sequence, although there the two concepts are not equivalent, see \cite{AHZ25}. This connection of gaps and the pair correlation statistic partially explains the current popularity of the topic and has recently been very explicitly addressed in several publications, see e.g. \cite{LS20}, \cite{Wei22a} and \cite{Wei23a}. But also independently of the more specific pair correlation statistic, the gap structure and more generally the nearest neighbor graph of sequences in $[0,1]^d$ is extensively explored in the literature, see e.g. \cite{Don09, EM04, PSZ16, Tah17} and \cite{HM20}.\\[12pt]
Although a uniformly distributed random sequence in $[0,1]$ generically has Poissonian pair correlations (see e.g. \cite{Mar07} for a proof), there are only few explicitly known such examples, the most prominent one being $x_n = \left\{ \sqrt{n} \right\}$ for $n$ not a perfect square, see \cite{BMV15}, and more recent ones in {\cite{LT22,LST21,RS24,Hau25}. On the contrary, many canonical candidates of uniformly distributed sequences are known to fail having Poissonian pair correlations, see e.g., \cite{BCC19, LS20, PS19, WS22}. \\[12pt]
Recall that the gaps of a finite sequence $(x_n)_{n=1}^N$ are defined as follows: suppose that the elements of the sequence are ordered by size, i.e. $x_1 \leq x_2 \leq \ldots \leq x_N$. Then the gaps $g_i$ are
$$g_i := x_{i+1} - {x_i}, \quad i=1,\ldots,N-1, \qquad \textrm{and} \qquad g_N = x_1 + 1 - x_N,$$
that is the distance between two neighboring elements of $(x_n)_{n=1}^N$ when considered as elements on the torus $\mathds{T}_1$.\\[12pt]
Much simpler than understanding the entire gap structure of a sequence is to consider only the largest gap, whose length is called the dispersion of the sequence, $\textrm{disp}_N(x_n) = \max_{i=1,\ldots,N} g_i$. According to \cite{Nie92} (Chapter 6.2), the best possible decay rate is $\textrm{disp}_N(x_n) \leq c N^{-1}$ as $N \to \infty$ with $c \in \mathbb{R}$ independent of $N$. A sequence with this behavior is also called a low-dispersion sequence. According to \cite{Nie84}, the best possible convergence $\lim_{N \to \infty} \textrm{disp}_N(x_n) N = \frac{2}{\log(4)}$ is obtained for the sequence
$$x_n := \left\{ \frac{\log(2n-1)}{\log(2)} \right\} = \left\{ \log_2(2n-1)\right\}, \quad n = 1,2,\ldots,$$
where $\left\{ \cdot \right\}$ denotes the fractional part of a real number. The proof is short and obviously only relies on the knowledge of the largest gap. It is remarkable that at the same time $(x_n)_{n \in \mathbb{N}}$ is not a low-discrepancy sequence -- for a definition see again \cite{Nie92}, Chapter 3 -- and not even uniformly distributed. However, much more effort is required to calculate the limiting function of the pair correlation statistic, which is the main result of this paper. 
\begin{theorem} \label{main:thm} Let 
$$x_n := \left\{ \frac{\log(2n-1)}{\log(2)} \right\}.$$
The limiting function $F(s)$ of the finite pair correlation statistic $F_N(s)$ as $N \to \infty$ exists and is given by
\[ 
F(s) = \begin{cases}  
0 & 0 \leq s < \log(4)^{-1}\\
3k - \frac{3k^2+2k}{4\log(2)s} & k \log(4)^{-1} \leq s < (k+1) \log(4)^{-1}, k \in \mathbb{N} \ \textrm{even},\\ 3k + 1 - \frac{3k^2+4k+1}{4\log(2)s} & k \log(4)^{-1} \leq s < (k+1) \log(4)^{-1}, k \in \mathbb{N} \ \textrm{odd}.\end{cases}
\]
\end{theorem}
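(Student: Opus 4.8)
The plan is to reduce everything to a completely explicit description of the point set and then to a single elementary integral. First I would exploit the dyadic self-similarity of the sequence: since $\{\log(2^i m)/\log 2\}=\{\log m/\log 2\}$, the fractional part only depends on the odd part of an integer. Using the unique factorisation $w=2^{\ord_2(w)}m$ with $m$ odd, one checks that for $N=2^J$ the multiset $\{x_n:1\le n\le 2^J\}$ equals $\{\{\log w/\log 2\}:2^J\le w<2^{J+1}\}$, because every odd $m\in[1,2^{J+1})$ has a unique dyadic multiple in the top block $[2^J,2^{J+1})$. Hence the sorted points are exactly $y_v=\log_2(1+v2^{-J})$ for $v=0,\dots,2^J-1$, i.e.\ the images of the equally spaced points $v2^{-J}$ under $u\mapsto\log(1+u)/\log 2$; in particular all $N$ gaps are $\log_2\!\big(1+1/(2^J+v)\big)$, the wrap-around gap being the $v=2^J-1$ term. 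This is the ``full understanding of the gap structure'' the abstract refers to, and it immediately recovers Niederreiter's dispersion constant. For general $2^J\le N<2^{J+1}$ the same bookkeeping gives a point set that is dense (spacing $2^{-(J+1)}$) on the arc $t\in[1,\beta)$ and half as dense on $t\in[\beta,2)$, with transition at $t=\beta:=N2^{-J}$.

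Next I would translate the pair-correlation count into a window count. Writing each point as $y=\log t/\log 2$ with $t\in[1,2)$, two points $t<t'$ satisfy $\|x-x'\|\le s/N$ precisely when $t'/t\le 2^{s/N}$, i.e.\ $t'-t\lesssim t\log(2)\,s/N$. Thus the number of partners inside the window is governed by the local point density times the window length $t\log(2)s/N$, and after taking floors the number of right-neighbours of the point at parameter $t$ tends to $\lfloor t\log(2)\,s\rfloor$ in the power-of-two case. Counting each unordered close pair once at its left endpoint gives $F_N(s)=\tfrac{2}{N}\sum_p(\text{right-count at }p)+o(1)$, the $o(1)$ absorbing the $O(1)$ wrap-around pairs. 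The equally spaced parameters turn the sum into a Riemann sum and yield, for $N=2^J$, the clean formula $F(s)=2\int_1^2\lfloor t\log(2)\,s\rfloor\,dt$.

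The step I expect to be the main obstacle is showing that the limit is independent of how $N\to\infty$, i.e.\ of the phase $\beta\in[1,2)$, since the gap distribution itself does depend on $\beta$. Here the two-density structure produces $F(s)=\tfrac{2}{\beta}\big[\,2\int_1^\beta\lfloor\tau\gamma\rfloor\,d\tau+\int_\beta^2\lfloor\tau\gamma/2\rfloor\,d\tau\,\big]$ with $\gamma=2\log(2)s/\beta$. After the substitution $u=\tau/\beta$ and then $v=2u$ in the first integral, the factor $2$ coming from the doubled density exactly cancels the Jacobian, and the two pieces telescope to $2\int_1^2\lfloor\log(2)s\,u\rfloor\,du$, the same value as for $\beta=1$. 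This self-similar cancellation is the crux and is precisely the integral manifestation of the doubling invariance $\{\log(2n)/\log 2\}=\{\log n/\log 2\}$. Alongside it I must verify that the discretisation errors (floors versus exact counts, the regime boundary at $t=\beta$, and the wrap-around) are $o(N)$, which holds because each contributes only $O(1)$ points.

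Finally I would evaluate $F(s)=2\int_1^2\lfloor\log(2)s\,u\rfloor\,du$ in closed form. Setting $c=\log(2)s$ and $k=\lfloor 2c\rfloor$, so that $k/\log 4\le s<(k+1)/\log 4$, the substitution $x=cu$ reduces the integral to $\tfrac1c\big(\phi(2c)-\phi(c)\big)$, where $\phi(y)=\int_0^y\lfloor x\rfloor\,dx=My-\tfrac{M(M+1)}{2}$ on $[M,M+1)$. Since $\lfloor 2c\rfloor=k$ while $\lfloor c\rfloor$ equals $k/2$ or $(k-1)/2$ according to the parity of $k$, the two parities produce exactly the branches $3k-(3k^2+2k)/(4\log(2)s)$ and $3k+1-(3k^2+4k+1)/(4\log(2)s)$ of the statement. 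Continuity at the breakpoints $s=k/\log 4$ together with the base value $F(s)=0$ for $s<1/\log 4$ then serve as convenient consistency checks.
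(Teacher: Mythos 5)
Your argument is correct, and it takes a genuinely different route from the paper's. The paper never writes the point set in closed form: it derives the gap multiset inductively (Proposition~\ref{prop:gaps}), passes to a representative with cyclically ascending gaps, computes the extremal neighbour counts $K_{\max}(N,s)$ and $K_{\min}(N,s)$ (Proposition~\ref{prop:kmax}) and, for each lag $k$, bounds the number $J_{N,s,k}$ of points whose $k$-th neighbour lies within range (Lemma~\ref{lem:bounds:J}); summing over $k$ yields a finite-$N$ sandwich for $F_N(s)$ of width $2\tilde{K}(N,s)/N$ (Theorem~\ref{thm:bounds}), whose limit gives the theorem. You instead make the sorted points themselves explicit via the dyadic bijection between odd $m<2^{J+1}$ and integers $w\in[2^J,2^{J+1})$ --- giving $y_v=\log_2(1+v2^{-J})$ for $N=2^J$ and, for general $N$, the two-density grid with seam at $\beta=N2^{-J}$ (equivalent content to Proposition~\ref{prop:gaps}) --- then convert the pair count into a local window count and a Riemann sum, arriving at the closed form $F(s)=2\int_1^2\lfloor\log(2)s\,t\rfloor\,dt$. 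Your substitution argument showing the $\beta$-dependence cancels exactly is valid (indeed, with $c=\log(2)s$ one has $2\int_1^\beta\lfloor 2c\tau/\beta\rfloor\,d\tau=\beta\int_{2/\beta}^2\lfloor cv\rfloor\,dv$ and $\int_\beta^2\lfloor c\tau/\beta\rfloor\,d\tau=\beta\int_1^{2/\beta}\lfloor cu\rfloor\,du$, so the two pieces reassemble into $2\int_1^2\lfloor cv\rfloor\,dv$), and it is the right mechanism for independence of the subsequence, since $\beta_N$ need not converge; you should just state explicitly that the $O(1)$-pair errors (floors, the seam at $t=\beta$, wrap-around) are uniform in $\beta\in[1,2]$ for fixed $s$, which they are. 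What each approach buys: yours yields a compact integral representation from which continuity of $F$ and the parity case split are immediate; the paper's yields explicit finite-$N$ bounds and hence a convergence rate (Theorem~\ref{thm:bounds}), which your $o(1)$ bookkeeping does not provide. A useful byproduct of your computation: your integral evaluates to exactly the branches of Theorem~\ref{main:thm}, namely to $4A-2B-\frac{A(A+1)-B(B+1)}{\log(2)s}$ with $A=\lfloor\log(4)s\rfloor$ and $B=\lfloor\log(2)s\rfloor$, consistent with the limit of the bounds of Theorem~\ref{thm:bounds} (note $Nc_{N,s}\to\log(2)s$); this reveals that the displayed formula \eqref{eq2} in the paper's proof carries a typo --- its denominator $\log(4)s$ should read $\log(2)s$. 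A quick sanity check at $s=1/\log(2)$, where in the limit every point has exactly one right-partner so that $F(s)=2$, agrees with your formula and with Theorem~\ref{main:thm}, but not with \eqref{eq2} as printed.
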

\begin{remark} Note that $F(s)$ behaves asymptotically like $2s \frac{3}{4}\log(4)(1+o(1))$. This fact will be further discussed at the end of Section~\ref{sec:proofs}, where we consider weak pair correlations.
\end{remark}
\begin{center}
    \begin{figure}
        \includegraphics[scale=0.6]{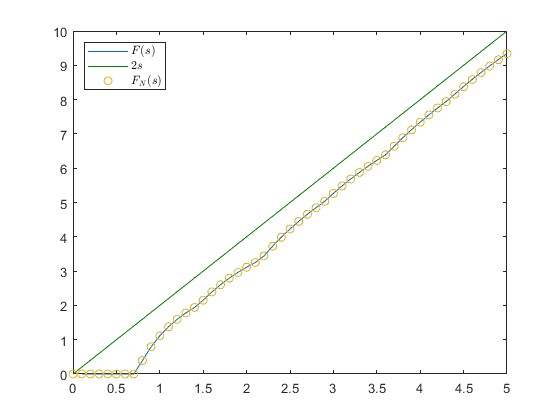}
        \caption{Comparison of $F_N(s)$ and $F(s)$ for $N=1,000$.}
        \label{fig:PCS}
    \end{figure}
\end{center}
Figure~\ref{fig:PCS} shows a comparison of $F_N(s)$ and the limit function $F(s)$ for $\textcolor{black}{N=1,000}$, which indicates that the speed of convergence of $F_N(s)$ to the function $F(s)$ in Theorem~\ref{main:thm} is relatively fast. Indeed, Theorem~\ref{thm:bounds} makes this statement more precise. Moreover, from Theorem~\ref{main:thm} we see that $F(s)$ is a continuous function.\\[12pt]
As indicated above, the main ingredient for deriving the function $F(s)$ lies in a complete understanding of the gap structure of $(x_n)_{n=1}^N$ including their explicit ordering, see Proposition~\ref{prop:gaps}. This allows us to infer information not only about the neighbor of each $x_i^*$ in the ordered sequence, but also about the distances of $x_i^*$ from all elements $x_{i+k}^*$ for $k \leq N$ (Proposition~\ref{prop:kmax}, Lemma~\ref{lem:bounds:J}). While the techniques applied here are entirely elementary, the proof might nonetheless serve as a blueprint for the calculation of other limiting functions if the gap structure is known (this might, for instance, be within reach for van der Corput and Kronecker sequences).\\[12pt]
The main difficulty in calculating the function $F(s)$ is to describe the gap structure of a sequence and all successful attempts to solve this problem prior to the present paper are based on very non-elementary arguments: For instance in \cite{MS13}, it is shown by using arguments originally developed in the context of spectral statistics of quantum graphs that the gap distribution of $(x_n)_{n \in \mathbb{N}} = (\left\{\log(n)\right\})_{n \in \mathbb{N}}$ has an explicit distribution which is not Poissonian. Although a precise expression for the number of neighbors with distance $<s/N$ is given, no formula for the distance of the second, third, and so on nearest neighbor is derived therein. However, this would be required to calculate the limit $F(s)$ from Theorem~\ref{main:thm}, while the distribution function on its own does not suffice for this purpose.\\[12pt] 
On the contrary, the authors prove in \cite{LT22} that sequences of the form $\left\{\alpha \log(n)^A\right\}$ for $A > 1$ and $\alpha > 0$ are Poissonian. Another recent result in \cite{Lut20} uses arguments from random matrix theory to calculate the limiting pair correlation function of orbits of a point in hyperbolic space under the action of a discrete subgroup.

\paragraph{Acknowledgements.} The research on this article was conducted during a stay at the Universit\'{e} de Montr\'{e}al, whom the author would like to thank for their hospitality. Furthermore, he thanks Chris Lutsko for useful comments on an earlier version of this paper.
\section{Proofs of Results} \label{sec:proofs}
\begin{proposition} \label{prop:gaps} Let $N \in \mathbb{N}$ with (unique) $2$-adic expansion $N = \sum_{l = 0}^L a_l 2^l$, where $a_k \in \{0,1\}$ for all $l=0,\ldots,L$ and $a_L=1$. Furthermore let $N_0 = 2 \cdot (N-2^L)$. Then the gaps of $(x_n)_{n=1}^N$ are
$$
g_i = \begin{cases} \tfrac{1}{\log(2)} \left( \log(2^{L+1}+i)-\log(2^{L+1}+i-1) \right)  & 1 \leq i \leq N_0,\\ \frac{1}{\log(2)} \left( \log(N-N_0 +i) - \log(N-N_0 + i -1) \right) & N_0 + 1 \leq i \leq N.\end{cases}
$$
\end{proposition}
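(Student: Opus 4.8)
The plan is to reduce everything to the ordering of the \emph{mantissas} of the odd integers $1,3,\dots,2N-1$ and then to recognize that these mantissas populate a dyadic grid in a completely explicit way. Writing $\log_2(\cdot):=\log(\cdot)/\log(2)$, each term is $x_n=\log_2(2n-1)-\lfloor\log_2(2n-1)\rfloor=\log_2\!\big((2n-1)/2^{k_n}\big)$, where $k_n:=\lfloor\log_2(2n-1)\rfloor$, so that $x_n=\log_2(\mu_n)$ with mantissa $\mu_n:=(2n-1)/2^{k_n}\in[1,2)$. Since $\log_2$ is strictly increasing on $[1,2)$, sorting the $x_n$ is the same as sorting the $\mu_n$, and the gaps become $g_i=\log_2(\mu^*_{i+1}/\mu^*_i)$ for the sorted mantissas $\mu^*_1<\dots<\mu^*_N$, together with the wrap-around gap $g_N=1-\log_2(\mu^*_N)=\log_2(2/\mu^*_N)$. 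First I would record which dyadic blocks $[2^k,2^{k+1})$ the odd integers $1,\dots,2N-1$ occupy: with $L=\lfloor\log_2 N\rfloor$ and $r:=N-2^L=\sum_{l=0}^{L-1}a_l2^l$ (so $N_0=2r$), the largest odd integer $2N-1\ge2^{L+1}-1$ forces all odd integers up to $2^{L+1}-1$ to be present, which fill the blocks $k=0,1,\dots,L$ \emph{completely}; when $r>0$ the remaining odd integers $2^{L+1}+1,\dots,2^{L+1}+2r-1=2N-1$ fill block $L+1$ with exactly its first $r$ entries. The count $1+\sum_{k=1}^{L}2^{k-1}+r=2^L+r=N$ confirms the bookkeeping.

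The heart of the argument is the following bijection, which I would isolate as a lemma. For the completely filled blocks $k\le L$, the set of mantissas is exactly a uniform grid:
\[
\{\mu_n: 1\le n\le N,\ k_n\le L\}=\Big\{\tfrac{t}{2^{L}}: t=2^{L},2^{L}+1,\dots,2^{L+1}-1\Big\}.
\]
Indeed, the mantissa of an odd $m$ in block $k$ is $m/2^k=(m\,2^{L-k})/2^{L}$, and I would show that the numerators $m\,2^{L-k}$, as $m$ runs over the odd integers of blocks $k=0,\dots,L$, run over \emph{every} integer of $[2^{L},2^{L+1})$ exactly once: each $t\in[2^{L},2^{L+1})$ has a unique factorization $t=m\,2^{v}$ with $m$ odd, and then $m$ lies in block $k=L-v\le L$. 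This is the one genuinely non-obvious step; everything afterwards is bookkeeping.

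It then remains to merge in the partially filled block $L+1$ and to sort. On the finer grid $\mu=1+t/2^{L+1}$ (equivalently $t=2^{L+1}(\mu-1)$), the grid above occupies the \emph{even} positions $t=0,2,\dots,2^{L+1}-2$, while block $L+1$ contributes the mantissas $1+(2i-1)/2^{L+1}$ for $i=1,\dots,r$, i.e.\ the \emph{odd} positions $t=1,3,\dots,2r-1$. Hence the occupied positions in increasing order form the consecutive run $t=0,1,2,\dots,2r$ followed by the even positions $t=2r+2,2r+4,\dots,2^{L+1}-2$. Reading off the $i$-th smallest position $t_i$ yields $t_i=i-1$ for $1\le i\le 2r+1=N_0+1$ and $t_i=2(i-r-1)$ for $2r+1\le i\le N$, so that $\mu^*_i=1+t_i/2^{L+1}$.

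Finally I would substitute into $g_i=\log_2(\mu^*_{i+1}/\mu^*_i)$. For $1\le i\le N_0$ the positions are consecutive, giving $g_i=\log_2\!\big((2^{L+1}+i)/(2^{L+1}+i-1)\big)$, which is the first case. For $N_0+1\le i\le N-1$ the positions advance by $2$, and using $N-N_0=2^{L}-r$ one obtains $g_i=\log_2\!\big((2^{L}+i-r)/(2^{L}+i-r-1)\big)=\log_2\!\big((N-N_0+i)/(N-N_0+i-1)\big)$; the wrap-around gap $g_N=\log_2(2/\mu^*_N)=\log_2\!\big(2^{L+1}/(2^{L+1}-1)\big)$ agrees with this same formula at $i=N$ because $2N-N_0=2^{L+1}$. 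These substitutions are routine, so the only real obstacle is establishing the grid bijection of the second paragraph; the degenerate case $r=0$ (that is $N=2^L$, $N_0=0$) is covered verbatim by the same computation with block $L+1$ empty.
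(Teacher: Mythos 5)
Your proof is correct, but it takes a genuinely different route from the paper's. The paper argues by induction on $N$: the base case $N=2$ is computed directly, and then the new point $x_N$ is shown to split the longest existing gap of $(x_n)_{n=1}^{N-1}$, of length $\tfrac{1}{\log(2)}(\log N-\log(N-1))$, into the two gaps $\tfrac{1}{\log(2)}(\log(2N)-\log(2N-1))$ and $\tfrac{1}{\log(2)}(\log(2N-1)-\log(2N-2))$, which propagates the claimed gap list. You instead give a direct, non-inductive description of the sorted point set itself: writing $x_n=\log_2(\mu_n)$ with mantissa $\mu_n=(2n-1)/2^{k_n}$, you prove via the unique factorization $t=m2^v$ (with $m$ odd, lying in block $k=L-v$) that the completely filled dyadic blocks $k\le L$ contribute exactly the uniform grid $\{t/2^L:\ 2^L\le t<2^{L+1}\}$, and you then merge in the $r$ odd positions contributed by block $L+1$ on the refined grid, obtaining the occupied positions $t=0,1,\dots,2r$ followed by $2r+2,2r+4,\dots,2^{L+1}-2$. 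I checked the details: the block count $2^L+r=N$, the identities $N-N_0=2^L-r$ and $2N-N_0=2^{L+1}$, the wrap-around gap $g_N=\log_2\bigl(2^{L+1}/(2^{L+1}-1)\bigr)$, and the degenerate case $r=0$ all work out, so there is no gap in your argument. What each approach buys: the paper's induction is far shorter, but it leaves implicit exactly the bookkeeping you make explicit — that the maximal gap of $(x_n)_{n=1}^{N-1}$ is the one in which $x_N$ lands, and where that gap sits in the ordering. Your version yields a closed form for the sorted sequence itself ($\mu^*_i=1+t_i/2^{L+1}$), not merely its multiset of gaps, from which the ascending gap ordering invoked later in the paper (display \eqref{eq:gaps}) is immediate rather than inferred; the price is the grid bijection of your second paragraph, which is the one genuinely nontrivial step and which you justify correctly.
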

The knowledge about the entire gap structure will ultimately put us in the position to calculate the limiting pair correlation function $F(s)$ of $(x_n)$.
\begin{proof} For $N=2$, there are only two gap lengths, namely $(\log(3)-\log(2))/\log(2)$ and $(\log(4)-\log(3))/\log(2)$. Next note for $n >2$ that
\[
x_n = \frac{\log(z_n)}{\log(2)}
\]
with
\[
z_n = 1 + \frac{2k-1}{2^{m}}
\]
if we write $n-1 = 2^{m-1}+k$ with $k \leq 2^{m-1}-1$. For $n=1,\ldots,2^m$, the expression $z_n$ therefore takes all values $z_n= 1+ \frac{l}{2^{m+1}}$ with \textbf{even} \textcolor{black}{numerator} $l$ (note that the exponent in the denominator is $2^{m+1}$ instead of $2^m$). For the next $n=2^{m}+1,\ldots,2^{m+1}$, we have $z_n = 1 + \frac{k}{2^{m+1}}$ for all odd natural numbers $k$ in ascending order. Thus, it follows for each $n$ in this range that $x_n$ splits the longest existing gap of $(x_j)_{j=1}^{n-1}$ (by the concavity of the logarithm), which has length $(\log(n)-\log(n-1))/\log(2)$, into one of length $(\log(2n)-\log(2n-1))/\log(2)$ and one of length $(\log(2n-1)-\log(2n-2))/\log(2)$. 
\end{proof}
Proposition~\ref{prop:gaps} shows that the pair correlation function for this specific sequence is invariant under shifts $x_n \mapsto x_n + C \mod 1$ with $C \in \mathbb{R}$. In the following, we may therefore (for fixed $N$) replace in our specific situation $(x_n)_{n=1}^N$ by the sequence with ascending gap lengths
\begin{align} \label{eq:gaps} g_i' = \frac{1}{\log(2)} \Big( \log(2N - (i-1)) - \log(2N-1 - (i-1)) \Big), \quad i = 1,\ldots,N.\end{align}
In other words, we may assume
$$y_1 = 0, \quad y_{i+1} = y_i + g_i', \quad i = 1,\ldots,N-1.$$ 
Then, the distance $\norm{y_N-y_1}$ is $\tfrac{1}{\log(2)}(\log(N+1)-\log(N))$, as desired. For fixed $1 \leq m \leq N$ and $s \geq 0$ we now turn to the calculation of
$$K_{m}(N,s) := \# \left\{ n \, : \, 1 \leq  m < n  \leq N + m, \norm{y_m-y_{i \, (\textrm{mod} N)}} \leq \frac{s}{N}\, \textrm{for all} \, m < i \leq n \right\} $$
and set $K_{\max}(N,s) = \max_{1 \leq m \leq N} K_{m}(N,s),$ i.e., $K_{\max}(N,s)$ is the maximum number of points with distance at most $\frac{s}{N}$, which lie to the right of any given $y_m$. Similarly, we define $K_{\min}(N,s) = \min_{1 \leq m \leq N} K_{m}(N,s)$ as the minimum number of points with distance at most $\frac{s}{N}$, which lie right of any given $y_m$. (Note that the same then also holds true for $x_m$). 
\begin{proposition} \label{prop:kmax} For $s/N < \frac{1}{2}$\textcolor{black}{,} let 
$$c_{N,s} =  2^{s/N} -1.$$
Then we have
$$K_{\max}(N,s) = \left\lfloor \frac{2Nc_{N,s}}{1+c_{N,s}} \right\rfloor$$
and
$$K_{\min}(N,s) = \left\lfloor N c_{N,s}\right\rfloor.$$
\end{proposition}
\begin{proof} It follows immediately from the gap structure of the $y_n$, compare \eqref{eq:gaps}, that $K_{\max}(N,s)$ is obtained for the elements right of $y_1$, i.e., when considering the successive gaps $g_1',g_2',\ldots,g_{K_{\max}(N,s)}'$. The distance between $y_1$ and $y_{k+1}$ for $k \neq N$ is given by $\tfrac{1}{\log(2)}(\log(2N)-\log(2N-k))$. Using the defining condition on the norm implies
$$\frac{1}{\log(2)} \left( \log(2N)-\log(2N-k) \right) \leq \frac{s}{N},$$
which is equivalent to
$$\frac{2N}{2N-k} \leq 1+c_{N,s}$$
and
$$k \leq \frac{2Nc_{N,s}}{1+ c_{N,s}}.$$
Since $k$ is an integer, the first claim follows. The minimum is obviously attained if the right-most point fulfilling the condition of $K_m(N,s)$ is $y_N$, because the gap sizes are ordered in ascending order. Hence, it must hold that
\[
\frac{1}{\log(2)}\left( \log(N+k)-\log(N) \right) \leq \frac{s}{N}
\]
and the calculation thus works similarly as for $K_{\max}(N,s)$.
\end{proof}
Moreover, the limit $\lim_{N \to \infty} K_{\max}(N,s)$ will appear several times in the remainder of the paper and we therefore turn to its calculation now.
\begin{lemma} \label{lem:kmax} For fixed $s \geq 0$ the function $K_{\max}(N,s)$ is bounded. It furthermore holds that
$$\lim_{N \to \infty} K_{\max}(N,s) = \begin{cases} \left\lfloor \log(4)s \right\rfloor & \log(4)s \notin \mathbb{Z} \\ \log(4)s-1 & \log(4)s \in \mathbb{Z} \end{cases}$$
and
$$\lim_{N \to \infty} K_{\min}(N,s) = \left\lfloor \log(2)s \right\rfloor.$$
\end{lemma}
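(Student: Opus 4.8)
The plan is to take the limit in the exact formulas for $K_{\max}(N,s)$ and $K_{\min}(N,s)$ provided by Proposition~\ref{prop:kmax}. The key observation is that the whole $N$-dependence is concentrated in the quantity $c_{N,s} = \exp\left(\log(2)\cdot \tfrac{s}{N}\right) - 1$, which tends to $0$ as $N \to \infty$ since the argument of the exponential tends to $0$. First I would compute the asymptotics of $c_{N,s}$ via the first-order Taylor expansion of the exponential: $c_{N,s} = \log(2)\cdot \tfrac{s}{N} + O(N^{-2})$, so that $c_{N,s} \sim \tfrac{\log(2)s}{N}$ as $N \to \infty$.

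Next I would substitute this into the arguments of the two floor functions. For the maximum, the relevant quantity is
$$\frac{2Nc_{N,s}}{1+c_{N,s}} = \frac{2N\left(\log(2)\tfrac{s}{N} + O(N^{-2})\right)}{1 + O(N^{-1})} = 2\log(2)s + O(N^{-1}) \longrightarrow \log(4)s,$$
using $2\log(2) = \log(4)$, and similarly
$$\frac{Nc_{N,s}}{1+c_{N,s}} \longrightarrow \log(2)s.$$
Thus both arguments of the floor functions converge to the stated limits. Boundedness of $K_{\max}(N,s)$ for fixed $s$ is then immediate, since a convergent real sequence is bounded and the floor of a bounded sequence is an integer sequence taking finitely many values.

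The main subtlety — the only point requiring care — is that $\lfloor \cdot \rfloor$ is not continuous, so convergence of the argument does not automatically transfer through the floor. I would handle this by the standard argument: once the argument $\tfrac{2Nc_{N,s}}{1+c_{N,s}}$ converges to $\log(4)s$, then for all sufficiently large $N$ it lies in any prescribed neighborhood of $\log(4)s$; if $\log(4)s$ is not an integer, this neighborhood can be chosen small enough to avoid integers, forcing the floor to be eventually constant and equal to $\lfloor \log(4)s\rfloor$. The exceptional set where $\log(4)s$ (respectively $\log(2)s$) is an integer is a discrete set of values of $s$; at those points the pair correlation function $F(s)$ will in any case be defined by the one-sided limits, so the behaviour of the floor exactly at integer boundaries does not affect the final statement of Theorem~\ref{main:thm}. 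I would therefore state the limits as holding for all $s \geq 0$ at which the respective products are non-integral, which is all but countably many $s$, and note that this suffices for the subsequent computation of $F(s)$.
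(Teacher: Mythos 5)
Your proposal is correct and follows essentially the same route as the paper: both proofs reduce the lemma to the limit of the argument inside the floor, computed from the exponential series (the paper works with the reciprocal $\frac{1+c_{N,s}}{2c_{N,s}N}$, you expand $c_{N,s}$ directly, which is an immaterial difference). The one place you genuinely go beyond the paper is the discontinuity of the floor, which the paper's proof silently ignores -- and your caution is warranted: since $\frac{2Nc_{N,s}}{1+c_{N,s}} = 2N\left(1 - 2^{-s/N}\right)$ and $1 - e^{-t} < t$ for $t > 0$, the argument approaches $\log(4)s$ strictly from below, so at the countably many $s$ with $\log(4)s \in \mathbb{Z}$ the limit of $K_{\max}(N,s)$ is actually $\log(4)s - 1$, not $\lfloor \log(4)s \rfloor$ (and analogously for $K_{\min}$). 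Your restriction of the stated limits to the non-integral values of $\log(4)s$ and $\log(2)s$, together with the remark that this countable exceptional set is harmless for the computation of the continuous limit function $F(s)$ in Theorem~\ref{main:thm}, is exactly the right repair; in this respect your write-up is more careful than the published proof.
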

\begin{proof} When calculating the limit of $K_{\max}(N,s)$ assume first that $\log(4)s \notin \mathbb{Z}$.  Looking at the reciprocal expression
$$\frac{1+c_{N,s}}{2c_{N,s}N} = \frac{1}{2c_{N,s}N} +\frac{1}{2N},$$
we see that it suffices to consider the limit of the first term.  
Replacing $c_{N,s}$ by its definition yields
$$\frac{1}{2N  (\exp \left(\log(2) \cdot \frac{s}{N} \right) -1)},$$
which converges to $\frac{1}{\log(4)s}$ as $N \to \infty$ as can be seen from the exponential series. If $\log(4)s\in \mathbb{Z}$, then note that 
\[
\frac{2Nc_{N,s}}{1+c_{N,s}}
\]
converges to $\log(4)s$ from below. As the floor operator is not left-continuous, we thus need to subtract $1$ in the limit.\\
For the calculation of the limit $K_{\min}(N,s)$ note that $Nc_{N,s}$ converges to $\log(2)s$ from above and that the floor operator is right-continuous.
\end{proof}
Next, we consider the problem from a different perspective and estimate how many $y_i$ have a $k$-th neighbor to their left, i.e. $y_{i - k}$, such that the distance between them is at most $\frac{s}{N}$.

\begin{lemma}  \label{lem:bounds:J} For $i \pm k \notin [1,N]$, define $y_{i \pm k} := y_{(i \pm k) \mod N}$. Let $k \leq K_{\max}(N,s)$ and set
$$J_{N,s,k} := \# \left\{ 1 \leq i  \leq N \, : \, \norm{y_i-y_{i-k}} \leq \frac{s}{N} \right\}.$$
Then we have
    $$\min \left( N,2N -1 -k - \frac{k(c_{N,s}+1)}{c_{N,s}}\right) \leq J_{N,s,k} \leq \min \left( N,2N -1 -\frac{k(c_{N,s}+1)}{c_{N,s}} \right) .$$
\end{lemma}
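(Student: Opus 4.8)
The plan is to convert the two-sided count into a one-sided one and then read off everything from the monotone gap structure \eqref{eq:gaps}. Writing
$$A := \left\{\, 1 \le i \le N : \norm{y_{i+k}-y_{i}} \le \tfrac{N}{s} \,\right\}$$
for the set of indices whose forward $k$-th neighbour is close (indices mod $N$), the identity $\norm{y_i-y_{i-k}} = \norm{y_{(i-k)+k}-y_{i-k}}$ shows that the backward condition at $i$ is precisely the forward condition at $i-k$. Hence $i$ is counted by $J_{N,s,k}$ exactly when $i \in A$ or $i \in A+k$, so that
$$J_{N,s,k} = \left| A \cup (A+k) \right|,$$
and it suffices to understand $A$ together with the effect of the shift by $k$.

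First I would pin down the structure of $A$. Using $y_m = \tfrac{1}{\log 2}\big(\log(2N)-\log(2N-m+1)\big)$, the forward arc of a non-wrapping index is $\tfrac{1}{\log 2}\big(\log(2N-i+1)-\log(2N-i-k+1)\big)$, which is increasing in $i$ because the gaps ascend. Running the same manipulation as in the proof of Proposition~\ref{prop:kmax}, the condition defining $A$ becomes $i \le 2N+1-k\tfrac{c_{N,s}+1}{c_{N,s}}$; an analogous explicit computation disposes of the $k$ wrapping indices near $N$, whose forward arc first crosses the largest gap $g_N$ and then collects the smallest gaps $g_1,g_2,\dots$. The conclusion is that $A$ is a single cyclic interval through the point $N\equiv 0$, with
$$|A| = \min\!\Big(N,\ \big\lfloor 2N+1-k\tfrac{c_{N,s}+1}{c_{N,s}}\big\rfloor\Big) + (\text{a wrap correction of size} \le k).$$

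The two bounds then come from the elementary observation that translating a cyclic interval by $k$ creates at most $k$ new elements and destroys none, so $|A| \le J_{N,s,k} = |A\cup(A+k)| \le \min(N,|A|+k)$, the right-hand equality holding once $k\le|A|$, which is guaranteed by $k\le K_{\max}(N,s)$. The lower bound is then $J_{N,s,k}\ge|A|\ge 2N-k\tfrac{c_{N,s}+1}{c_{N,s}}$, obtained just from the non-wrapping indices $1,\dots,\lfloor 2N+1-k\tfrac{c_{N,s}+1}{c_{N,s}}\rfloor$; the upper bound is $J_{N,s,k}\le |A|+k$, and the extra $+k$ turns the factor $\tfrac{c_{N,s}+1}{c_{N,s}}$ into $\tfrac{1}{c_{N,s}}$, giving $2N-\tfrac{k}{c_{N,s}}$. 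In both lines the outer $\min(N,\cdot)$ only records the trivial cap $J_{N,s,k}\le N$ and the regime in which $A$ already fills the whole index set.

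The step I expect to be the real obstacle is the wraparound bookkeeping. One must check that for $i$ near $N$ the forward arc is still safely below $1/2$, so that the torus norm agrees with the forward arc and $A$ is genuinely an interval rather than two stray pieces; and one must count exactly how many wrapping indices fall under the threshold, since this quantity — together with the floor functions — is precisely what is absorbed into the gap of size $k$ between the lower and upper estimates. I would organise this as a short case distinction according to whether $2N+1-k\tfrac{c_{N,s}+1}{c_{N,s}}$ exceeds $N-k$ and whether the wrapping block is empty, treating each case as routine once the cyclic-interval picture is in hand.
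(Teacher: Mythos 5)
Your overall route is, in substance, the paper's own argument in different clothing: the paper likewise exploits that the ascending gaps \eqref{eq:gaps} make the $k$-step distance monotone in $i$, solves the same threshold inequality $\tfrac{k}{2N-i_0-k}\le c_{N,s}$ for the last admissible index, and accounts for the at most $k$ wrapping indices near $N$ as exactly the difference between the two stated bounds. Your identity $J_{N,s,k}=\lvert A\cup(A+k)\rvert$, with $A$ the forward-admissible set, is a tidy repackaging of the paper's observation that the backward condition at $i$ is the forward condition at $i-k$, and your cyclic-interval description of $A$ is correct; up to this point the two proofs match. (One smaller quibble: in the saturated regime where $2N+1-k\tfrac{c_{N,s}+1}{c_{N,s}}>N-k$, your lower bound does not come ``just from the non-wrapping indices'', which number only $N-k$; there you need the wrap part of $A$ or the shifted copy $A+k$, as your closing case distinction implicitly concedes.)

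The genuine gap is in the final inequality chain for the upper bound, and it cannot be closed as written. Reading the threshold as $s/N$ (as the definition of $c_{N,s}$ requires), the non-wrapping forward condition holds iff $i\le\beta:=2N+1-k\tfrac{c_{N,s}+1}{c_{N,s}}$, so by your own accounting $\lvert A\rvert\le\lfloor\beta\rfloor+W$ with a wrap correction $0\le W\le k$, and the translation contributes another $+k$; altogether $J_{N,s,k}\le\lfloor\beta\rfloor+W+k\le 2N+1-\tfrac{k}{c_{N,s}}+k$. The budget of size $k$ separating the lemma's two bounds is thus spent entirely by the shift $A\mapsto A\cup(A+k)$, and nothing remains to absorb $W$ and the floors --- contrary to your assertion that these are ``absorbed into the gap of size $k$''. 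In fact the stated upper bound can genuinely fail: for $N=4$, $k=1$, $s=4\log_2(7/6)$ one has $c_{N,s}=1/6$ and $K_{\max}(N,s)=\lfloor 8/7\rfloor=1$, yet each of $i=1,2,3$ has a $1$-neighbor within $s/N=\log_2(7/6)$ (the gaps are $\log_2(8/7)<\log_2(7/6)<\log_2(6/5)$ with wrap gap $\log_2(5/4)$), so $J_{N,s,1}=3$, while $\min\left(N,2N-k/c_{N,s}\right)=2$. You are in good company: the paper's proof commits the same off-by-$k$ conflation, treating the auxiliary variable $i_0$ (bounded by $2N-k\tfrac{c_{N,s}+1}{c_{N,s}}$) as the number of admissible indices, although the admissible indices $i$ run up to $i_0+k+1\approx 2N+1-k/c_{N,s}$. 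Since the discrepancy is $O(k)=O(K_{\max}(N,s))$, which is uniformly bounded by Lemma~\ref{lem:kmax}, Theorem~\ref{thm:bounds} and Theorem~\ref{main:thm} are unaffected after division by $N$; but your argument, like the paper's, establishes the lemma only up to an additive error of order $k$, not with the stated constants.
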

\begin{proof} For $k +1 \leq i \leq N$, consider 
\begin{align} \label{eq:cond2}
\norm{y_{i+k}-y_i} = \frac{1}{\log(2)} \left( \log(2N-1-i-k)-\log(2N-1-i) \right)\textcolor{black}{.}
\end{align}
The condition $\norm{y_{i}-y_{i-k}} \leq \frac{s}{N}$ is in this range thus equivalent to
\[
\frac{2N-1-i}{2N-1-i-k} \leq c_{N,s}+1,
\]
which leads to
\begin{align*} \label{eq:condition}
i \leq 2N+1 - \frac{k(c_{N,s}+1)}{c_{N,s}}.
\end{align*}
Hence there are \textit{at most}
\[
2N -1 -\frac{k(c_{N,s}+1)}{c_{N,s}}
\]
elements in $J_{N,s,k}$, because \eqref{eq:cond2} does not hold for $i \leq k$ and we thus do not know if these $i \leq k$ are in $J_{N,s,k}$ or not. Hence, we only obtain an upper bound. For the lower bound we need to take into account that we might have overestimated the number of elements by $k$.
\end{proof}

The preparatory work suffices to prove the following theorem which presents a lower and an upper bound of distance 
$$\frac{4(K_{\max}(N,s)(K_{\max}(N,s)+1) - K_{\min}(N,s)(K_{\min}(N,s)+1))}{N}$$ 
for $F_N(s)$. Note that the numerator is itself a uniformly bounded expression by Lemma~\ref{lem:kmax}. 
\begin{theorem} \label{thm:bounds} For all $N \in \mathbb{N}$ and $s \geq 0$, denote 
\begin{align} \label{eq3}
\tilde{K}(N,s) := K_{\max}(N,s)(K_{\max}(N,s)+1) - K_{\min}(N,s)(K_{\min}(N,s)+1).
\end{align}
Then, the following bounds for $F_N(s)$ hold
\begin{align*}
     4 \left(1-\frac{1}{2N}\right) K_{\max}(N,s) & - 2\left(1-\frac{1}{N}\right) K_{\min}(N,s) -  \frac{c_{N,s} + 1}{c_{N,s}N}  \cdot \tilde{K}(N,s) - \frac{\tilde{K}(N,s)}{N} \\ & \leq F_N(s) \leq  \\
    4 \left(1-\frac{1}{2N}\right) K_{\max}(N,s) & -  2\left(1-\frac{1}{N}\right) K_{\min}(N,s)  -  \frac{c_{N,s} + 1}{c_{N,s}N}  \cdot \tilde{K}(N,s).
\end{align*}
\end{theorem}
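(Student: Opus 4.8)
The plan is to reduce $F_N(s)$ to a sum over the cyclic index-gap $k$ between paired points, and to control each summand with Proposition~\ref{prop:kmax} and Lemma~\ref{lem:bounds:J}. Working with the shifted sequence $(y_n)$ whose gaps \eqref{eq:gaps} are ascending, I first record the counting identity
$$N F_N(s) = 2\sum_{k=1}^{K_{\max}(N,s)} P_k, \qquad P_k := \#\left\{1\le i\le N : \norm{y_i - y_{i+k}} \le \tfrac{s}{N}\right\},$$
with indices read cyclically as in Lemma~\ref{lem:bounds:J}. The justification is pure bookkeeping: every ordered close pair determines a well-defined cyclic index-gap $k$; since $s/N<1/2$, only gaps $k\le K_{\max}(N,s)$ can occur because no point has more than $K_{\max}(N,s)$ close neighbours on a single side, and each unordered close pair is counted twice.

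Next I relate $P_k$ to the quantity $J_{N,s,k}$ of Lemma~\ref{lem:bounds:J}. Because the gaps are monotonically ordered, for fixed $k$ the set of $i$ whose right $k$-neighbour is close is an initial block $\{1,\dots,t_k\}$, and the set whose left $k$-neighbour is close is its $k$-shift. Hence $P_k = t_k$, the union has size $J_{N,s,k} = t_k + \min(t_k,k)$, and the intersection (both $k$-neighbours close) has size $\max(0,t_k-k)$. In the main regime $t_k \ge k$ this yields $P_k = J_{N,s,k} - k$, so that $2P_k$ is pinned between $2\ell_k$ and $2u_k$, where $\ell_k = 2N - k\frac{c_{N,s}+1}{c_{N,s}}$ and $u_k = 2N - \frac{k}{c_{N,s}} = \ell_k + k$ are exactly the two bounds of Lemma~\ref{lem:bounds:J}.

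I then split the sum at $k = K_{\min}(N,s)$. For $k \le K_{\min}(N,s)$ every point has at least $K_{\min}(N,s)$ close neighbours on each side by Proposition~\ref{prop:kmax}, so $J_{N,s,k} = N$ and the summand saturates; this contributes the leading $4K_{\max}(N,s)$ term and the $-(2+\tfrac1N)K_{\min}(N,s)$ term after division by $N$. For $K_{\min}(N,s) < k \le K_{\max}(N,s)$ I insert the Lemma~\ref{lem:bounds:J} bounds and sum the resulting arithmetic progression; using $\sum_{k=K_{\min}+1}^{K_{\max}} 2k = K_{\max}(K_{\max}+1) - K_{\min}(K_{\min}+1) = \tilde{K}(N,s)$ produces the term $\frac{c_{N,s}+1}{c_{N,s}N}\tilde{K}(N,s)$, while the per-term uncertainty $u_k-\ell_k = k$ accumulates to the width $\frac{2\tilde{K}(N,s)}{N}$ separating the upper and lower bounds.

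The main obstacle is the cyclic wraparound. The largest gap of $(y_n)$ sits between $y_N$ and $y_1$, so the windows of $k$ consecutive gaps straddling this junction are the longest, and for points near it the left/right bookkeeping above and the identity $P_k = J_{N,s,k}-k$ pick up boundary corrections of size up to $k$ — precisely the $\pm k$ slack already absorbed into Lemma~\ref{lem:bounds:J}. Controlling these corrections, together with the secondary regime near $k = K_{\max}(N,s)$ where $t_k < k$ and the effect of the floor functions in $K_{\max}(N,s)$ and $K_{\min}(N,s)$, is what forces the conclusion to be an interval rather than an equality; verifying that every one of these effects stays within the stated $O(\tilde{K}(N,s)/N)$ slack is the delicate part of the argument.
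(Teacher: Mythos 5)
Your proposal follows essentially the same route as the paper's proof: you decompose $N F_N(s)$ over the cyclic index-gap $k \le K_{\max}(N,s)$, observe that the count saturates for $k \le K_{\min}(N,s)$, and for $K_{\min}(N,s) < k \le K_{\max}(N,s)$ insert the bounds of Lemma~\ref{lem:bounds:J} and sum the arithmetic progression $2\sum_{k=K_{\min}(N,s)+1}^{K_{\max}(N,s)} k = \tilde{K}(N,s)$, which is exactly the paper's argument. If anything, your explicit bookkeeping of the relation $P_k = J_{N,s,k} - k$ and of the cyclic wraparound corrections of size up to $k$ is more detailed than the paper's own proof, which leaves these boundary effects implicit in Lemma~\ref{lem:bounds:J}.
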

\begin{proof} The function $F_N(s)$ can be rewritten as
\begin{align*}
    F_N(s) = \frac{1}{N} & \sum_{k=1}^{K_{\max}(N,s)} \sum_{n=1}^N  \left( \# \left\{ 1 \leq  n  \leq N \, : \norm{y_n-y_{n-k}} \leq \frac{s}{N} \right\} \right.\\
    & \left. + \# \left\{ 1 \leq  n  \leq N \, : \norm{y_n-y_{n+k}} \leq \frac{s}{N} \right\} \right).
\end{align*}
Since the number of points whose $k$-th neighbor to their left lies within a distance of at most $\frac{s}{N}$ is equal to the number of points whose $k$-th neighbor to their right satisfies the same condition, we obtain
\begin{align*}
    F_N(s) = \frac{1}{N} & \sum_{k=1}^{K_{\max}(N,s)} \sum_{n=1}^N  \left( 2 \# \left\{ 1 \leq  n  \leq N \, : \norm{y_n-y_{n-k}} \leq \frac{s}{N} \right\} \right).
\end{align*}
For $k \leq K_{\min}(N,s)$ the inner sum (over $n$) is equal to $2N$. The total contribution of this part of the sum to $F_N(s)$ is thus $2K_{\min}(N,s)N$. For $k > K_{\min}(N,s)$, the bounds from Lemma~\ref{lem:bounds:J} can be applied and contribute to the upper bound of $F_N(s)$ in total
$$\frac{1}{N} \left( 2(2N-1) (K_{\max}(N,s) - K_{\min}(N,s)) - \sum_{k=K_{\min}(N,s)+1}^{K_{\max}(N,s)} 2k  \frac{c_{N,s} + 1}{c_{N,s}} \right).$$
According to the lower bound in Lemma~\ref{lem:bounds:J}, there is an additional term
$$-\sum_{k=K_{\min}(N,s)+1}^{K_{\max}(N,s)} 2k$$
which needs to be taken into account to bound $F_N(s)$ from below. Reordering the terms, finishes the proof. \end{proof}
This puts us in the position to easily prove the main theorem of this paper.
\begin{proof}[Proof of Theorem~\ref{main:thm}] As $N \to \infty$, the lower and upper bound in Theorem~\ref{thm:bounds} obviously converge to the same value (if the limit exists) by Lemma~\ref{lem:kmax}. Applying Lemma~\ref{lem:kmax} to either of the bounds yields for 
\begin{align} \label{eq2}
\begin{split}
F(s) = 4& \lfloor \log(4)s \rfloor - 2 \lfloor \log(2)s \rfloor\\
&  - \frac{\lfloor \log(4)s \rfloor(\lfloor \log(4)s \rfloor+1) - \lfloor \log(2)s \rfloor (\lfloor \log(2)s \rfloor + 1)}{\log(2)s}
\end{split}
\end{align}
(surprisingly, this limit does not depend on the question whether $\log(4)s \in \mathbb{Z}$ or not). Finally, consider $s \geq 0$ with $k \log(4)^{-1} \leq s < (k+1) \log(4)^{-1}$ for $k \in \mathbb{N}$. Then we note that $\lfloor \log(2)s \rfloor = \frac{k}{2}$, if $k$ is even, and $\lfloor \log(2)s \rfloor = \frac{k-1}{2}$, if $k$ is odd. Inserting this into \eqref{eq2} yields the claim.
\end{proof}

\paragraph{Weak Pair Correlations.} We end this paper by a short discussion of the weak pair correlation function, which is also called $\alpha$-pair correlation function. This notion was originally introduced in \cite{NP07} and is for $0 \leq \alpha \leq 1$ defined by 
$$F_{N}^\alpha (s) := \frac{1}{N^{2-\alpha}} \# \left\{ 1 \leq k \neq l \leq N \ : \ \left\| x_k - x_l\right\| \leq \frac{s}{N^\alpha} \right\},$$
so that the case $\alpha = 1$ corresponds to the usual pair correlation statistic, see also \cite{Wei22a} for an even more general notion. A sequence is said to have weak correlations if $\lim_{N \to \infty} F_{N}^\alpha (s) = 2s$ for all $s \geq 0$. Without too many changes, the proof of Theorem~\ref{thm:bounds} can be amended to the situation of $\alpha$-pair correlations. In the proof of Proposition~\ref{prop:kmax}, the variable $c_{N,s}$ needs to be replaced by
$$C_{N,s,\alpha} = 2^{s/N^\alpha} -1$$
and accordingly also $K_{\max}(N,s,\alpha)$ and $K_{\min}(N,s,\alpha)$ depend on $\alpha$. A similar remark holds for Lemma~\ref{lem:bounds:J}. For instance, the lower bound in Theorem~\ref{thm:bounds} then becomes
$$g(N,s,\alpha) = 4 \frac{K_{\max}(N,s,\alpha)}{N^{1-\alpha}} - 2 \frac{K_{\min}(N,s,\alpha)}{N^{1-\alpha}} - \frac{2}{N^{2-\alpha}} - \tilde{K}(N,s,\alpha) \cdot \frac{C_{N,s,\alpha} + 1}{C_{N,s,\alpha} N^{2-\alpha}},$$
where $\tilde{K}(N,s,\alpha)$ is defined as in \eqref{eq3} but including the changes indicated above. For any $\alpha < 1$, elementary calculations imply that
$$\lim_{N \to \infty} g(N,s,\alpha) = 2s \cdot \frac{3}{4} \log(4),$$
which is independent of $\alpha$ (and only the speed of convergence is the faster, the smaller $\alpha$ is). Thus, we see that the weak pair correlation statistic behaves significantly differently from the one with $\alpha = 1$. Nonetheless, the sequence \textcolor{black}{thus does} not have weak correlations for any $0 \leq \alpha \leq 1$ although $ \frac{3}{4} \log(4) \approx 1.0397$ is close to $1$. This is in accordance with known results, see e.g. \cite{Ste18, HZ24}.

\bibliographystyle{alpha}
\bibdata{literatur}
\bibliography{literatur}

\textsc{Ruhr West University of Applied Sciences, Duisburger Str. 100, D-45479 M\"ulheim an der Ruhr,} \texttt{christian.weiss@hs-ruhrwest.de}

\end{document}